\documentclass[12pt]{amsart}

\usepackage{mathrsfs}
\usepackage{graphicx}
\usepackage{amsmath}
\usepackage{amssymb}
\usepackage{amsthm}
\usepackage{graphicx}



\setlength{\textwidth}{6.1in}
 \setlength{\textheight}{8.9in}
\setlength{\topmargin}{0.0in}
\setlength{\oddsidemargin}{.1in}
\setlength{\evensidemargin}{.1in}

\newtheorem{theorem}{Theorem}
\theoremstyle{plain}

\newtheorem{proposition}[theorem]{Proposition}
\newtheorem{corollary}[theorem]{Corollary}

\newcommand{\bC}{\mathbb{C}}

\newcommand{\bE}{\mathbb{E}}

\def\dd{{\mathrm{d}}}

\newcommand\bel[1]{\begin{equation}\label{#1}}
\newcommand\ee{\end{equation}}
\newcommand\bld[1]{\boldsymbol{#1}}

\numberwithin{equation}{section}
\numberwithin{theorem}{section}

\begin{document}

\today

\title[Kronecker products]
{Simple  Spectral Bounds
for Sums of  Certain Kronecker Products}

\author{S. V. Lototsky}
\curraddr[S. V. Lototsky]
{Department of Mathematics, USC\\
Los Angeles, CA 90089 USA\\
tel. (+1) 213 740 2389; fax: (+1) 213 740 2424}
\email[S. V. Lototsky]{lototsky@usc.edu}
\urladdr{http://www-bcf.usc.edu/$\sim$lototsky}

\subjclass[2000]{Primary   15A18; Secondary 15A69}
\keywords{Covariance matrix, Mean-square stability,
Spectral abscissa, Spectral radius}

\begin{abstract}
New bounds are derived for the
  eigenvlues of sums of  Kronecker products of square matrices
  by relating the corresponding matrix expressions to the
covariance structure of suitable bi-linear stochastic systems in discrete and continuous
time.
  \end{abstract}

\maketitle

\section{Introduction}

 Kronecker product  reduces a matrix-matrix equation to an equivalent
 matrix-vector form (\cite{BJ} or  \cite[Chapter 4]{HJ}).
For example,  consider a matrix equation $BXA^{\top}=C$
with known $\dd$-by-$\dd$ matrices $A,B,C$, and
the unknown $\dd$-by-$\dd$  matrix $X$. To cover the most general setting, all matrices
are assumed to have complex-valued entries.
  Introduce a column vector
  $\mathrm{vec}(X)=\bld{X}\in \bC^{\dd^2}$ by stacking together the columns of
  $X$, left-to-right:
  \begin{equation}
  \label{vec}
 \mathrm{vec}(X)=
 \bld{X}=(X_{11},\ldots,X_{\dd 1},X_{12},\ldots,X_{\dd 2},\ldots,
  X_{1 \dd},\ldots, X_{\dd \dd})^{\top}.
  \end{equation}
  Then direct computations show that
  the matrix equation $AXB^{\top} =C$ can be written in the matrix-vector
  form for the unknown vector $\bld{X}$ as
  \begin{equation}
  \label{KP0}
  (A\otimes B) \bld{X}=\bld{C},\ \bld{C}=\mathrm{vec}(C),
  \end{equation}
where $A\otimes B$ is the {\tt Kronecker product} of
matrices $A$ and $B$, that is, an $\dd^2$-by-$\dd^2$ block matrix with
blocks $A_{ij}B$. In other words, \eqref{KP0} means
\begin{equation}
  \label{KP}
  \mathrm{vec}\big(BXA^{\top})=(A\otimes B)\mathrm{vec}(X),
\end{equation}
with $\mathrm{vec}(\cdot)$ operation defined in \eqref{vec}.

In what follows, an
 $\dd$-dimensional column vector will usually be denoted by a lower-case
 bold Latin letter, e.g. $\bld{h}$, whereas upper-case regular
 Latin letter, e.g. $A$, will usually mean an $\dd$-by-$\dd$ matrix.
  Then $|\bld{h}|$ is the Euclidean norm of $\bld{h}$ and $|A|$
  is the induced matrix norm
  $$
  |A|=\max\big\{|A\bld{h}|:|\bld{h}|=1\big\}.
   $$
   For a matrix $A\in \bC^{\dd\times \dd}$,
  $\overline{A}$ is the matrix with complex conjugate entries,
  $A^{\top}$ means  transposition, and $A^*$ denotes the
  conjugate transpose: $A^*=\overline{A^{\top}}={\overline{A}}^{\top}$.
  The same notations, $\overline{\phantom{A}}$, ${}^\top$, and ${}^*$,
  will also be used for column vectors in $\bC^\dd$. The identity matrix
  is $I$.

For a square matrix $A$, define the following numbers:
\begin{align*}
\bld{\rho}(A)&
=\max\{|\lambda(A)|: \lambda(A) {\rm \ is \ an \ eigenvalue\
of } \ A\} \ \ ({\rm {\tt spectral\  radius}\ of}\ A);
\\
\bld{\alpha}(A) &
=\max\{ \Re \lambda(A): \lambda(A) {\rm \ is \ an \ eigenvalue\
of } \ A\} \ \ ({\rm {\tt spectral \ abscissa}\ of}\ A);
\\
\bld{\varrho}(A)&
=\min\{ \Re \lambda(A): \lambda(A) {\rm \ is \ an \ eigenvalue\
of } \ A\}.
\end{align*}
For a Hermitian matrix $H$,
\begin{equation}
\label{bound1}
\bld{\varrho}(H)|\bld{x}|^2\leq \bld{x}^*H\bld{x}\leq \bld{\alpha}(H)|\bld{x}|^2.
\end{equation}

While eigenvalues of the matrices
$
A\otimes B \ {\rm and }\  A\otimes I + I\otimes B
$
can be  easily expressed in terms of the eigenvalues of the matrices
$A$ and $B$ \cite[Theorems 4.2.12 and 4.4.5]{HJ},
there is, in general, no easy way to get the eigenvalues of the matrices
\begin{equation}
\label{DTM}
D_{A,B}=\overline{A}\otimes  A + \sum_{k=1}^m\overline{B}_k\otimes B_k
\end{equation}
and
\begin{equation}
\label{CTM}
C_{A,B}=\overline{A}\otimes I + I\otimes A +
\sum_{k=1}^m\overline{B}_k\otimes B_k,
\end{equation}
 which appear, for example, in the study of bi-linear stochastic systems.
Paper \cite{FLW} presents one of the first investigations of the
spectral properties of \eqref{DTM} and \eqref{CTM}.
 The main result of the current paper provides another contribution
  to the subject:
\begin{theorem}
\label{th-main}
Given matrices $A,B_1, \ldots, B_m \in \mathbb{C}^{\dd\times \dd}$,
define the matrix $D_{A,B}$ by \eqref{DTM}, the matrix
$C_{A,B}$ by \eqref{CTM}, and also the matrices
\begin{equation}
\label{DTN}
N_{A,B}=A^*A+\sum_{k=1}^m B^*_kB_k
\end{equation}
and
\begin{equation}
\label{CTMM}
M_{A,B}= A+A^*+\sum_{k=1}^m B^*_kB_k.
\end{equation}
Then
\begin{align}
\label{DTSKS}
\bld{\varrho}(N_{A,B})&\leq \bld{\rho}(D_{A,B})
 \leq \bld{\alpha}(N_{A,B}),\\
 \label{CTSKS}
\bld{\varrho}(M_{A,B})&\leq \bld{\alpha}(C_{A,B})
 \leq \bld{\alpha}(M_{A,B}).
\end{align}
\end{theorem}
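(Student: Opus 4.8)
The plan is to read both Kronecker matrices as linear maps on $\dd$-by-$\dd$ matrices. By \eqref{KP}, $(\overline{A}\otimes A)\mathrm{vec}(X)=\mathrm{vec}(AXA^*)$ and $(\overline{B}_k\otimes B_k)\mathrm{vec}(X)=\mathrm{vec}(B_kXB_k^*)$, so $D_{A,B}$ is the matrix of the operator $\mathcal{D}(X)=AXA^*+\sum_kB_kXB_k^*$, while $C_{A,B}$ is the matrix of $\mathcal{C}(X)=AX+XA^*+\sum_kB_kXB_k^*$. Since $\mathrm{vec}$ is a linear isomorphism, $\bld{\rho}(D_{A,B})=\bld{\rho}(\mathcal{D})$ and $\bld{\alpha}(C_{A,B})=\bld{\alpha}(\mathcal{C})$. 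These are exactly the maps propagating the covariance $P=\bE[\bld{x}\bld{x}^*]$ of the bilinear systems in the title: $\mathcal{D}$ is the one-step covariance update of $\bld{x}_{n+1}=(A\xi_0+\sum_kB_k\xi_k)\bld{x}_n$ with independent standardized noises, and $\mathcal{C}$ is the generator of $t\mapsto\bE[\bld{x}(t)\bld{x}(t)^*]$ for $\dd\bld{x}=A\bld{x}\,\dd t+\sum_kB_k\bld{x}\,\dd W_k$. The essential structural consequence is that $\mathcal{D}$ maps Hermitian matrices to Hermitian matrices and that both $\mathcal{D}$ and the semigroup $e^{t\mathcal{C}}$ send the cone of positive semidefinite (PSD) matrices into itself.

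The second ingredient is a pair of trace identities. By cyclicity of the trace, $\mathrm{tr}\,\mathcal{D}(X)=\mathrm{tr}(XA^*A)+\sum_k\mathrm{tr}(XB_k^*B_k)=\mathrm{tr}(XN_{A,B})$, and likewise $\mathrm{tr}\,\mathcal{C}(X)=\mathrm{tr}\big(X(A+A^*)\big)+\sum_k\mathrm{tr}(XB_k^*B_k)=\mathrm{tr}(XM_{A,B})$; equivalently, $N_{A,B}$ and $M_{A,B}$ are the values at $I$ of the trace-adjoints of $\mathcal{D}$ and $\mathcal{C}$. I also record the elementary estimate that, for any PSD $P$ and any Hermitian $H$,
\begin{equation*}
\bld{\varrho}(H)\,\mathrm{tr}\,P\ \le\ \mathrm{tr}(PH)\ \le\ \bld{\alpha}(H)\,\mathrm{tr}\,P,
\end{equation*}
which follows by diagonalizing $P=\sum_i\mu_i\bld{u}_i\bld{u}_i^*$ with $\mu_i\ge0$ and applying \eqref{bound1} to each unit vector $\bld{u}_i$, since $\mathrm{tr}(PH)=\sum_i\mu_i\,\bld{u}_i^*H\bld{u}_i$ and $\mathrm{tr}\,P=\sum_i\mu_i$.

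With these in hand the two displays are proved in parallel. Because $\mathcal{D}$ preserves the PSD cone, the Perron--Frobenius theorem for cone-preserving maps provides a PSD eigenmatrix $P\ne0$ with $\mathcal{D}(P)=\bld{\rho}(\mathcal{D})\,P$; taking traces and inserting the first identity gives $\bld{\rho}(\mathcal{D})\,\mathrm{tr}\,P=\mathrm{tr}\,\mathcal{D}(P)=\mathrm{tr}(PN_{A,B})$. Dividing by $\mathrm{tr}\,P>0$ and using the trace estimate with $H=N_{A,B}$ yields \eqref{DTSKS}. In the continuous case, positivity of $e^{t\mathcal{C}}$ forces the spectral abscissa $\bld{\alpha}(\mathcal{C})$ to be a real eigenvalue of $\mathcal{C}$ with a PSD eigenmatrix $P\ne0$, so $\mathcal{C}(P)=\bld{\alpha}(\mathcal{C})\,P$, and the same computation with the second identity and $H=M_{A,B}$ yields \eqref{CTSKS}.

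The step I expect to be the crux is the extraction of a PSD eigenvector at the governing eigenvalue. For $\mathcal{D}$ this is the standard Perron--Frobenius statement for a linear map preserving a proper cone. For $\mathcal{C}$ it is the corresponding statement for the generator of a positive semigroup: here one either applies Perron--Frobenius to the positive resolvent $(\beta I-\mathcal{C})^{-1}=\int_0^\infty e^{-\beta t}e^{t\mathcal{C}}\,\dd t$ for large real $\beta$, whose Perron eigenvalue $(\beta-\bld{\alpha}(\mathcal{C}))^{-1}$ returns the PSD eigenmatrix, or argues directly with the exponential growth rate of $\mathrm{tr}\,e^{t\mathcal{C}}(I)$, exploiting that $I$ is an interior point of the cone. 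I would also verify the soft points that $N_{A,B}$ and $M_{A,B}$ are Hermitian (so $\bld{\varrho},\bld{\alpha}$ are their least and greatest eigenvalues and \eqref{bound1} applies), that $\mathrm{tr}\,P>0$ for nonzero PSD $P$, and that passing through $\mathrm{vec}$ preserves the spectra used on the left-hand sides.
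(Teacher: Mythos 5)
Your proposal is correct, but it takes a genuinely different route from the paper. The paper's argument is probabilistic: it realizes $D_{A,B}$ and $e^{tC_{A,B}}$ as the propagators of the covariance matrix $\bE\,\bld{x}\bld{y}^*$ of the bilinear systems \eqref{DT1} and \eqref{CT1}, bounds the second moment $r_x$ between $|\bld{u}|^2\gamma^n$ and $|\bld{u}|^2\beta^n$ (resp.\ $e^{\gamma t}$, $e^{\beta t}$) using \eqref{bound1}, transfers these to $|\bld{U}(n)|$ via the Cauchy--Schwartz bound \eqref{cov} and the diagonal lower bound \eqref{cov1}, and finally converts growth rates of $|D_{A,B}^n|$ and $|e^{tC_{A,B}}|$ into spectral bounds through the limit formulas \eqref{SPR} and \eqref{SPA}. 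You work entirely deterministically: you read $D_{A,B}$ and $C_{A,B}$ as the Stein operator $\mathcal{D}(X)=AXA^*+\sum_k B_kXB_k^*$ and the Lyapunov-type operator $\mathcal{C}(X)=AX+XA^*+\sum_k B_kXB_k^*$, observe that they preserve the cone of positive semidefinite matrices, extract a PSD eigenmatrix at the governing eigenvalue via Perron--Frobenius theory for cone-preserving maps (Vandergraft/Krein--Rutman, and its resolvent version for the semigroup $e^{t\mathcal{C}}$), and conclude from the trace identities $\mathrm{tr}\,\mathcal{D}(X)=\mathrm{tr}(XN_{A,B})$ and $\mathrm{tr}\,\mathcal{C}(X)=\mathrm{tr}(XM_{A,B})$. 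Your route requires that cone-theoretic machinery plus three small points you should make explicit: (i) positivity of $e^{t\mathcal{C}}$ proved without probability, e.g.\ by Lie--Trotter, $e^{t\mathcal{C}}=\lim_{n}\bigl(e^{(t/n)\mathcal{L}}e^{(t/n)\mathcal{B}}\bigr)^n$ with $\mathcal{L}(X)=AX+XA^*$, $e^{s\mathcal{L}}(X)=e^{sA}Xe^{sA^*}$, and $\mathcal{B}(X)=\sum_k B_kXB_k^*$; (ii) Perron--Frobenius applies to the restriction of $\mathcal{D}$ to the real space $\mathcal{H}$ of Hermitian matrices, and since $\bC^{\dd\times\dd}=\mathcal{H}\oplus i\mathcal{H}$ and $\mathcal{D}$ is the complexification of that restriction, the spectral radius is unchanged by restricting; (iii) in the resolvent argument, identifying the Perron eigenvalue of $(\beta I-\mathcal{C})^{-1}$ with $(\beta-\bld{\alpha}(\mathcal{C}))^{-1}$ requires letting $\beta\to\infty$ and pigeonholing over the finitely many eigenvalues of $\mathcal{C}$. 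All of these are standard and fillable, so there is no genuine gap. In exchange, your argument yields strictly more than the paper's: it shows that $\bld{\rho}(D_{A,B})$ and $\bld{\alpha}(C_{A,B})$ are themselves eigenvalues attained with PSD eigenmatrices, and it represents them exactly as the Rayleigh-type quotients $\mathrm{tr}(PN_{A,B})/\mathrm{tr}\,P$ and $\mathrm{tr}(PM_{A,B})/\mathrm{tr}\,P$, from which both the theorem and the paper's Corollary (the scalar cases $N_{A,B}=\beta I$, $M_{A,B}=\beta I$) follow instantly; the paper's probabilistic route, by contrast, is elementary and self-contained, and it directly exhibits the equivalence with mean-square stability of the stochastic systems that motivates the result.
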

In the particular case of real matrices and $m=1$, Theorem \ref{th-main} implies
\begin{align*}
&\bld{\varrho}\left(A^{\top}A+ B^{\top}B\right)\leq
\bld{\rho}\left({A}\otimes A+
{B}\otimes B\right)
 \leq \bld{\alpha}\left(A^{\top}A+ B^{\top}B\right),\\
&\bld{\varrho}\left(A+A^{\top}+ B^{\top}B\right)\leq
\bld{\alpha}\left({A}\otimes I + I\otimes A
+ {B}\otimes B\right)
\leq  \bld{\alpha}\left(A+A^{\top}+ B^{\top}B\right).
\end{align*}

\begin{corollary} If the matrix $N_{A,B}$ is scalar, that is,
$N_{A,B}=\beta I$, then $\bld{\rho}(D_{A,B})=\beta$;
if $M_{A,B}=\beta I$, then $\bld{\alpha}(C_{A,B})=\beta$.
\end{corollary}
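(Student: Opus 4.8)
The plan is to obtain both equalities as immediate squeeze (sandwich) consequences of the two-sided bounds \eqref{DTSKS} and \eqref{CTSKS} already established in Theorem~\ref{th-main}; no new machinery is required. The key preliminary observation is that both $N_{A,B}$ and $M_{A,B}$ are Hermitian. Indeed, $N_{A,B}=A^*A+\sum_{k=1}^m B_k^*B_k$ and $M_{A,B}=(A+A^*)+\sum_{k=1}^m B_k^*B_k$ are each a sum of Hermitian matrices, so their eigenvalues are real. Consequently, for these two matrices the quantities $\bld{\varrho}$ and $\bld{\alpha}$ reduce to the smallest and the largest eigenvalue, respectively.

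First I would treat the discrete-time assertion. Under the hypothesis $N_{A,B}=\beta I$, every eigenvalue of $N_{A,B}$ equals $\beta$; because $N_{A,B}$ is Hermitian this forces $\beta$ to be real (in fact $\beta\ge 0$, since $N_{A,B}$ is a sum of positive-semidefinite matrices), and hence $\bld{\varrho}(N_{A,B})=\bld{\alpha}(N_{A,B})=\beta$. Substituting these common values into \eqref{DTSKS} gives $\beta\le \bld{\rho}(D_{A,B})\le \beta$, and the sandwich forces $\bld{\rho}(D_{A,B})=\beta$. The continuous-time assertion is entirely parallel: if $M_{A,B}=\beta I$, then the same reasoning (with $\beta$ now only guaranteed real) yields $\bld{\varrho}(M_{A,B})=\bld{\alpha}(M_{A,B})=\beta$, and feeding this into \eqref{CTSKS} gives $\beta\le \bld{\alpha}(C_{A,B})\le \beta$, whence $\bld{\alpha}(C_{A,B})=\beta$.

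Since everything follows directly from the already-proven Theorem~\ref{th-main}, there is no substantive obstacle. The only point that deserves a moment's care is the elementary remark that a scalar value $\beta$ of a Hermitian matrix makes its maximal and minimal (real) eigenvalues coincide, so that both outer bounds in \eqref{DTSKS} and \eqref{CTSKS} collapse to the single value $\beta$.
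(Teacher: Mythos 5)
Your proposal is correct and is exactly the argument the paper intends: since $N_{A,B}=\beta I$ (resp.\ $M_{A,B}=\beta I$) forces $\bld{\varrho}=\bld{\alpha}=\beta$, the two-sided bounds \eqref{DTSKS} and \eqref{CTSKS} collapse to the stated equalities. Your added remarks on Hermitian structure and the reality (and nonnegativity) of $\beta$ are sound but routine; there is no gap.
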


The reason Theorem \ref{th-main} is potentially useful is that
the matrices $M_{A,B}$ and $N_{A,B}$ are
Hermitian and have size $\dd$-by-$\dd$,
whereas the matrices $C_{A,B}$ and $D_{A,B}$ are in general
not Hermitian or even normal and have a much bigger size $\dd^2$-by-$\dd^2$.
For example, with $m=1$, if matrices $A$ and $B$ are orthogonal, then
the matrix $D_{A,B}$ can be fairly complicated, but
$N_{A,B}=2I$, and  we immediately conclude that $\bld{\rho}(D_{A,B})=2$.
Similarly, let  $m=1$, let
 $A=a I+S$ for a real number $a$ and a skew-symmetric
matrix $S$, and let $B$ be orthogonal, then $\bld{\alpha}(C_{A,B})=2a+1$.
Section \ref{secEx} below presents more examples and further
discussions.

The matrix expressions $A\otimes B$ and $A\otimes I + I\otimes B$
have designated names (Kronecker product and Kronecker sum),
but there is no established terminology for  \eqref{DTM} and \eqref{CTM}.
In what follows, \eqref{DTM} will be referred to as the discrete-time
stochastic Kronecker sum, and \eqref{CTM} will be referred to
as the continuous-time stochastic Kronecker sum. The reason for this
choice of names is motivated by the type of problems
in which the corresponding matrix expressions appear.

The proof of Theorem \ref{th-main} relies on the analysis of the covariance
matrix of suitably constructed random vectors. Recall that
the {\tt covariance matrix} of
 two $\bC^\dd$-valued random column-vectors
 $\bld{x}=(x_1,\ldots,x_\dd)^{\top}$ and $\bld{y}=(y_1,\ldots,y_\dd)^{\top}$
is
$
U_{x,y}=\bE \bld{x}\bld{y}^*.
$
Also define
$
r_x=\sum_{i=1}^\dd \bE |x_i|^2,\
\ r_y=\sum_{i=1}^\dd \bE |y_i|^2,\ {\rm and } \
\bld{U}_{x,y}={\mathrm{vec}}(U_{x,y}).
$
Then
$
|\bld{U}_{x,y}|^2 = \sum_{i,j=1}^\dd|\bE x_iy^*_j|^2,
$
 and the Cauchy-Schwartz inequality
$
|\bE x_iy^*_j|^2\leq
 \bE|x_i|^2 \bE|y_j|^2
$
leads to an upper bound on $|\bld{U}_{x,y}|$:
\begin{equation}
\label{cov}
|\bld{U}_{x,y}|^2 \leq r_xr_y.
\end{equation}
In the special case $\bld{x}=\bld{y}$,
\begin{equation*}
\begin{split}
\dd|\bld{U}_{x,y}|^2 &= \dd\sum_{i,j=1}^\dd|\bE x_ix^*_j|^2
= \dd\sum_{i=1}^\dd \Big(\bE |x_i|^2 \Big)^2 +
\dd\sum_{i\not= j }|\bE x_ix^*_j|^2\\
&
\geq \dd\sum_{i=1}^\dd \Big(\bE |x_i|^2 \Big)^2 \geq
\left(\sum_{i=1}^\dd \bE|x_i|^2\right)^2,
\end{split}
\end{equation*}
leading to  a lower bound:
\begin{equation}
\label{cov1}
|\bld{U}_{x,x}|\geq {\dd}^{-1/2} \,r_x.
\end{equation}

Section \ref{secDT} explains how matrices of the type \eqref{DTM}
appear in the analysis of discrete-time bi-linear stochastic systems and
presents the proof of \eqref{DTSKS}.
Section \ref{secCT} explains how matrices of the type \eqref{CTM}
appear in the analysis of continuous-time bi-linear stochastic systems and
presents the proof of \eqref{CTSKS}. The connection with stochastic
systems also illustrates why it is indeed natural to bound
the spectral radius for matrices of the type \eqref{DTM} and the
spectral abscissa for matrices of the type \eqref{CTM}.

\section{Discrete-Time Stochastic Kronecker Sum}
\label{secDT}
Given matrices $A,B_1, \ldots, B_m \in \mathbb{C}^{\dd\times \dd}$,
consider two $\mathbb{C}^\dd$-valued random sequences
$\bld{x}(n)=(x_1(n),\ldots, x_\dd(n))^{\top}$ and
$\bld{y}(n)=(y_1(n),\ldots, y_\dd(n))^{\top}$,\ $n=0,1,2,\ldots,$ defined by
\begin{equation}
\label{DT1}
\begin{split}
\bld{x}(n+1)&=A\bld{x}(n)+\sum_{k=1}^m B_k\bld{x}(n)\xi_{n+1,k},
\ \bld{x}(0)=\bld{u},\\
\bld{y}(n+1)&=A\bld{y}(n)+\sum_{k=1}^m B_k\bld{y}(n)\xi_{n+1,k},
\ \bld{y}(0)=\bld{v}.
\end{split}
\end{equation}
Both equations in \eqref{CT1} are driven by a
{\tt white noise} sequence $\xi_{n,k}$, $n\geq 1,\
k=1,\ldots, m$ of independent, for all $n$ and $k$, random variables,
all with zero mean and unit variance:
\begin{equation}
\label{wn-dt}
\bE \xi_{n,k}=0,\ \bE\xi_{n,k}^2=1,\
\bE \xi_{n,k} \xi_{p,\ell}=0 \ {\rm if } \ n\not=p \ {\rm or}\
k\not= \ell;
\end{equation}
 the initial conditions $\bld{u},\bld{v}\in \mathbb{C}^N$ are non-random.
 Note that the sequences $\bld{x}(n)$ and
$\bld{y}(0)$ satisfy the same equation and differ only in the initial
conditions. In particular, $\bld{u}=\bld{v}$ implies
$\bld{x}(n)=\bld{y}(n)$ for all $n\geq 0$.  The term {\em bi-linear} in connection with \eqref{DT1} reflects the
fact that the noise sequence enters the system in a multiplicative, as opposed to
additive, way.

\begin{proposition}
\label{Prop-dtc}
Define  $V(n)=\bE \bld{x}(n)\bld{y}^*(n)$,  the covariance matrix
of the random vectors $\bld{x}(n)$ and $\bld{y}(n)$ from \eqref{DT1},
and define $r_x(n)=\bE \bld{x}^*(n)\bld{x}(n) = \bE|\bld{x}(n)|^2$.
  Then the vector
$
\bld{U} (n)=\mathrm{vec}\big(V (n)\big)
$
satisfies
\begin{equation}
\label{DT3}
{\bld{U}} (n+1)=D_{A,B}^n\bld{U} (0),
\end{equation}
with the matrix
\begin{equation}
\label{DT4}
D_{A,B}=\overline{A}\otimes A
+\sum_{k=1}^m \overline{B}_k\otimes B_k,
\end{equation}
and the number $r_x(n)$ satisfies
\begin{equation}
\label{DT6.1}
|\bld{u}|^2\gamma^n \leq r_x(n)\leq |\bld{u}|^2\beta^n,
\end{equation}
where $\gamma$ is the smallest eigenvalue and $\beta$ is the largest
eigenvalue of the non-negative Hermitian matrix
\begin{equation}
\label{DT-NN}
N_{A,B}=A^*A+\sum_{k=1}^m B_k^*B_k.
\end{equation}

\end{proposition}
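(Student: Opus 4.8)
The plan is to derive a one-step recursion for the covariance matrix $V(n)$, convert it to vector form with the $\mathrm{vec}$ identity \eqref{KP}, and treat $r_x(n)$ through a parallel scalar recursion combined with the Rayleigh-quotient bound \eqref{bound1}.

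First I would substitute the dynamics \eqref{DT1} into $V(n+1)=\bE\,\bld{x}(n+1)\bld{y}^*(n+1)$ and expand the product. Since $\bld{x}(n)$ and $\bld{y}(n)$ are determined by the noise up to time $n$, each factor $\xi_{n+1,k}$ is independent of $\bld{x}(n),\bld{y}(n)$; the two terms linear in the noise then vanish because $\bE\xi_{n+1,k}=0$, while the double sum collapses to its diagonal because $\bE\xi_{n+1,k}\xi_{n+1,\ell}=\delta_{k\ell}$, both from \eqref{wn-dt}. This produces the matrix recursion
\[
V(n+1)=A\,V(n)\,A^*+\sum_{k=1}^m B_k\,V(n)\,B_k^*.
\]

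Next I would apply \eqref{KP} term by term. Writing $A^*=\overline{A}^{\top}$ and $B_k^*=\overline{B}_k^{\top}$, every summand of the form $P\,V\,P^*$ becomes $(\overline{P}\otimes P)\,\mathrm{vec}(V)$, so that $\bld{U}(n+1)=D_{A,B}\,\bld{U}(n)$ with $D_{A,B}$ as in \eqref{DT4}. Iterating this one-step relation from the initial data yields \eqref{DT3}. The only bookkeeping to watch here is the appearance of the complex conjugate $\overline{P}$, rather than $P$ itself, which is forced by the conjugate transpose in $P^*=\overline{P}^{\top}$. For $r_x(n)$ I would run the identical expansion on $r_x(n+1)=\bE\,\bld{x}^*(n+1)\bld{x}(n+1)$; the same cancellations leave
\[
r_x(n+1)=\bE\big[\bld{x}^*(n)\,N_{A,B}\,\bld{x}(n)\big],
\]
with $N_{A,B}$ the Hermitian matrix \eqref{DT-NN}. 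Applying \eqref{bound1} to the integrand and taking expectations gives $\gamma\,r_x(n)\le r_x(n+1)\le\beta\,r_x(n)$ with $\gamma=\bld{\varrho}(N_{A,B})$ and $\beta=\bld{\alpha}(N_{A,B})$; since $r_x(0)=|\bld{u}|^2$, an immediate induction produces \eqref{DT6.1}.

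The computations are routine, and I do not expect a serious obstacle. The one point demanding care is the moment bookkeeping for the noise, namely verifying that the terms linear in $\xi_{n+1,k}$ drop out and that the quadratic terms reduce to the diagonal; this rests entirely on the independence of $\xi_{n+1,\cdot}$ from the past together with the normalization in \eqref{wn-dt}, and on keeping the conjugates straight when passing through $\mathrm{vec}$.
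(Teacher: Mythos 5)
Your proposal is correct and follows essentially the same route as the paper: expand $\bld{x}(n+1)\bld{y}^*(n+1)$, use independence of the noise from the past together with \eqref{wn-dt} to kill the linear terms and diagonalize the quadratic ones, obtain $V(n+1)=AV(n)A^*+\sum_k B_kV(n)B_k^*$, pass to vector form via \eqref{KP}, and handle $r_x(n)$ by the same expansion plus the Rayleigh-quotient bound \eqref{bound1}. No gaps; the one-step recursion $\bld{U}(n+1)=D_{A,B}\bld{U}(n)$ you derive is exactly the paper's intermediate identity, and your iteration matches its conclusion.
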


\begin{proof}
By \eqref{DT1},
$$
\bld{x}(n+1)=A\bld{x}(n)+\sum_{k=1}^m B_k\bld{x}(n)\xi_{n+1,k},\ \
 \bld{y}^*(n+1)=\bld{y}^*({n})A^*+\sum_{k=1}^m \bld{y}^*(n)B_k^*\xi_{n+1,k},
$$
so that
\begin{align}
\label{D-prod1}
\bld{x}(n+1)\bld{y}^*(n+1)&=
A\bld{x}(n)\bld{y}^*({n})A^*+
\sum_{k,\ell=1}^m B_k\bld{x}(n)\bld{y}^*(n)B_\ell^*\xi_{n+1,k}
\xi_{n+1,\ell}\\
\label{D-prod2}
&+\sum_{k=1}^m A\bld{x}(n)\bld{y}^*(n)B_k^*\xi_{n+1,k}
+\sum_{k=1}^m B_k\bld{x}(n)\bld{y}^*({n})A^*\xi_{n+1,k}.
\end{align}
The vectors  $\bld{x}(n)$ and $\bld{y}(n)$
are independent of every $\xi_{n+1,k}$.
Therefore, using \eqref{wn-dt},
\begin{align}
\label{D-prod3}
\bE \big(A\bld{x}(n)\bld{y}^*(n)B_k^*\xi_{n+1,k}\big) &=
\bE \big(A\bld{x}(n)\bld{y}^*(n)B_k^*\big)\bE\xi_{n+1,k}=0,
\\
\notag
\sum_{k,\ell=1}^m
\bE \big(B_k\bld{x}(n)\bld{y}^*(n)B_\ell^*\xi_{n+1,k}\xi_{n+1,\ell}\big)
&=\sum_{k,\ell=1}^m
\bE \big(B_k\bld{x}(n)\bld{y}^*(n)B_\ell^*\big)
\bE\big(\xi_{n+1,k}\xi_{n+1,\ell}\big)
\\
\label{D-prod4}
&=\sum_{k=1}^m  B_k\bE \big(\bld{x}(n)\bld{y}^*(n)\big)B_\ell^*
=\sum_{k=1}^m  B_kV (n)B_\ell^*.
\end{align}

As a result,
\begin{equation*}
\label{DT2}
{V }(n+1)
=AV (n)A^*+\sum_{k=1}^m B_kV (n)B_k^*,
\end{equation*}
and \eqref{DT3} follows from \eqref{KP}.

Similarly,
$$
{r}_x(n+1) =
\bE \bld{x}^*(n)\left(A^*A+\sum_{k=1}^m B_k^*B_k\right) \bld{x}(n).
$$
Then  \eqref{bound1} implies $\gamma r_x(n)\leq r_x(n+1)\leq \beta r_x(n)$,
and \eqref{DT6.1} follows.
\end{proof}

\noindent Given the origin of equation \eqref{DT3}, the matrix $D_{A,B}$
from \eqref{DT4} is
natural to call the {\tt {discrete-time stochastic Kronecker sum}}
of the matrices $A$ and $B_k$.

For a square matrix $A$, denote by $\bld{\rho}$ the {\tt spectral
radius} of $A$:
$$
\bld{\rho}(A) =\max\{ |\lambda(A)|: \lambda(A) {\rm \ is \ an \ eigenvalue\
of } \ A\}.
$$
It is really very well known  that
\begin{equation}
\label{SPR}
\bld{\rho}(A) = \lim_{n\to + \infty}  |A^n|^{1/n}.
\end{equation}

\begin{theorem} For every matrices
$A,B_1,\ldots, B_m\in \mathbb{C}^{\dd\times \dd}$,
\begin{equation}
\label{DT5}
\bld{\varrho}(N_{A,B})\leq \bld{\rho}(D_{A,B})\leq
\bld{\alpha}(N_{A,B}),
\end{equation}
where the matrix $N_{A,B}=A^*A+\sum_{k=1}^m B_k^*B_k$,
$\bld{\varrho}(N_{A,B})$ is the smallest eigenvalue of $N_{A,B}$,
and $\bld{\alpha}(N_{A,B})$ is the largest eigenvalue of $N_{A,B}$
\end{theorem}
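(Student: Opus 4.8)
The plan is to read off both inequalities in \eqref{DT5} directly from the dynamics recorded in Proposition~\ref{Prop-dtc}, converting the per-trajectory covariance estimates into two-sided bounds on the operator norm $|D_{A,B}^n|$ and then invoking the spectral-radius formula \eqref{SPR}. Throughout I write $\gamma=\bld{\varrho}(N_{A,B})$ and $\beta=\bld{\alpha}(N_{A,B})$, and I use that the recursion $\bld{U}(n+1)=D_{A,B}\bld{U}(n)$ established in the proof of Proposition~\ref{Prop-dtc} gives $\bld{U}(n)=D_{A,B}^n\bld{U}(0)$, where $\bld{U}(0)=\mathrm{vec}(\bld{u}\bld{v}^*)$ for the initial data $\bld{x}(0)=\bld{u}$, $\bld{y}(0)=\bld{v}$.

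For the upper bound I would first take arbitrary non-random initial conditions $\bld{u},\bld{v}$ and combine the Cauchy--Schwarz covariance estimate \eqref{cov} with the growth bound \eqref{DT6.1} applied to both $r_x(n)$ and $r_y(n)$:
\begin{equation*}
|D_{A,B}^n\,\mathrm{vec}(\bld{u}\bld{v}^*)|=|\bld{U}(n)|\le \big(r_x(n)\,r_y(n)\big)^{1/2}\le |\bld{u}|\,|\bld{v}|\,\beta^{n}.
\end{equation*}
Choosing $\bld{u}=\bld{e}_i$ and $\bld{v}=\bld{e}_j$ then gives $|D_{A,B}^n\,\mathrm{vec}(\bld{e}_i\bld{e}_j^*)|\le\beta^{n}$ for every matrix unit. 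Since the vectors $\mathrm{vec}(\bld{e}_i\bld{e}_j^*)$ form exactly the standard basis of $\bC^{\dd^2}$, I expand an arbitrary unit vector $\bld{w}=\mathrm{vec}(W)$ in this basis and use linearity to get $|D_{A,B}^n\bld{w}|\le\big(\sum_{i,j}|W_{ij}|\big)\beta^{n}\le \dd\,\beta^{n}$, hence $|D_{A,B}^n|\le \dd\,\beta^{n}$. Taking $n$-th roots and letting $n\to\infty$ in \eqref{SPR} yields $\bld{\rho}(D_{A,B})\le\beta=\bld{\alpha}(N_{A,B})$.

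For the lower bound I specialize to $\bld{v}=\bld{u}$ with $|\bld{u}|=1$, so that $\bld{x}(n)=\bld{y}(n)$ and $\bld{U}(0)=\mathrm{vec}(\bld{u}\bld{u}^*)$ is a unit vector. The lower covariance estimate \eqref{cov1} together with the lower bound in \eqref{DT6.1} then gives
\begin{equation*}
|D_{A,B}^n| \ge |D_{A,B}^n\bld{U}(0)| = |\bld{U}(n)| \ge \dd^{-1/2}\,r_x(n) \ge \dd^{-1/2}\,\gamma^{n}.
\end{equation*}
Because $N_{A,B}$ is non-negative Hermitian, $\gamma\ge0$; the case $\gamma=0$ is trivial, and for $\gamma>0$ taking $n$-th roots in \eqref{SPR} produces $\bld{\rho}(D_{A,B})\ge\gamma=\bld{\varrho}(N_{A,B})$. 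Combining the two estimates proves \eqref{DT5}.

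The two chains of inequalities are routine once the covariance lemmas are in hand, so the one place that genuinely uses the structure of the problem is the passage from the bounds on the special vectors $\mathrm{vec}(\bld{u}\bld{v}^*)$ to the bound on the full operator norm. The main obstacle I anticipate is keeping that spanning step clean: one must verify that the products $\bld{u}\bld{v}^*$ (equivalently the matrix units $\bld{e}_i\bld{e}_j^*$) really exhaust a basis of $\bC^{\dd^2}$ under $\mathrm{vec}$, so that the dimension-dependent constant $\dd$ is harmless after extracting $n$-th roots and does not survive in the limit.
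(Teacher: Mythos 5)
Your proposal is correct and follows essentially the same route as the paper: both derive $|D_{A,B}^n\,\mathrm{vec}(\bld{u}\bld{v}^*)|\le|\bld{u}|\,|\bld{v}|\,\beta^n$ from \eqref{cov} and \eqref{DT6.1}, pass to a bound $|D_{A,B}^n|\le a\,\beta^n$ and apply \eqref{SPR}, then get the lower bound by taking $\bld{u}=\bld{v}$ and using \eqref{cov1}. The only difference is that you make explicit, via the matrix-unit basis and Cauchy--Schwarz, the spanning step that the paper compresses into ``for a positive real number $a$,'' yielding the concrete constant $a=\dd$, which is a welcome but not essentially different elaboration.
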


\begin{proof}
Similar to Proposition \ref{Prop-dtc}, write
$\gamma=\bld{\varrho}(N_{A,B})$ and
$\beta=\bld{\alpha}(N_{A,B}).$
It follows from \eqref{DT3} that
\begin{equation}
\label{DT5.1}
|\bld{U} (n)|=|D^n_{A,B}\,\bld{U} (0)|.
\end{equation}
To get the upper bound in \eqref{DT5}, note that  \eqref{cov} and \eqref{DT6.1}
imply
\begin{equation}
\label{DT6}
|\bld{U} (n)|\leq \sqrt{r_x(n)r_y(n)} \leq |\bld{u}|\, |\bld{v}|\ \beta^n.
\end{equation}

Combining  \eqref{DT5.1} and  \eqref{DT6} leads to
\begin{equation}
\label{DT7}
|D^n_{A,B}\,\bld{U} (0)|\leq |\bld{u}|\, |\bld{v}|\ \beta^n.
\end{equation}
Since $\bld{U} (0)=\mathrm{vec}(\bld{u}\bld{v}^*)=\bld{v}\otimes \bld{u}$,
 and $\bld{u}$ and $\bld{v}$ are arbitrary vectors in $\mathbb{C}^\dd$,
 it follows from
\eqref{DT7} that
\begin{equation}
\label{DT8}
|D^n_{A,B}|\leq a \, \beta^n
\end{equation}
for a positive real number $a$. Then the upper bound in \eqref{DT5} follows from
\eqref{DT8} and \eqref{SPR}.

To get the lower bound, take $\bld{u}=\bld{v}$ with $ |\bld{u}|=1$ so that  $\bld{x}(n)=\bld{y}(n)$
 for all $n\geq 0$. Then \eqref{cov1} and \eqref{DT5.1} imply
 $$
 \dd^{-1/2} \gamma^n \leq |\bld{U} (n)|  \leq |D^n_{A,B}|,
 $$
 and the lower  bound in \eqref{DT5} follows from
 \eqref{SPR}.

\end{proof}

\section{Continuous-Time Stochastic Kronecker Sum}
\label{secCT}
Given matrices $A,B_1, \ldots, B_k \in \mathbb{C}^{\dd\times \dd}$,
consider two $\mathbb{C}^\dd$-valued stochastic processes
$\bld{x}(t)=({x}_1(t),\ldots, x_\dd(t))^{\top}$
and $\bld{y}(t)=(y_1(t),\ldots, y_\dd(t))^{\top}$, $t\geq 0$, defined by the
It\^{o} integral equations
\begin{equation}
\label{CT1}
\begin{split}
\bld{x}(t)&=\bld{u}+\int_0^tA\bld{x}(s)ds
+\sum_{k=1}^m \int_0^tB_k\bld{x}(s)dw_k(s),
\\
\bld{y}(t)&=\bld{v}+\int_0^tA\bld{y}(s)ds+\sum_{k=1}^m \int_0^tB_k\bld{y}(s)dw_k(s).
\end{split}
\end{equation}
Both equations in \eqref{CT1} are driven by
independent standard Brownian motions $w_1$$,\ldots,$ $w_m$,  and the initial
conditions $\bld{u},\bld{v}\in \mathbb{C}^\dd$ are non-random.
 Note that the processes $\bld{x}(t)$ and
$\bld{y}(t)$ satisfy the same equation and differ only in the initial
conditions. Existence and uniqueness of solution are well-known:
\cite[Theorem 5.2.1]{Oksendal}. The terms $dw_k(t)$ can be considered
continuous-time analogues of white noise input in \eqref{DT1}.
 The term {\em bi-linear} in connection with \eqref{CT1} reflects the
fact that the noise process enters the system in a multiplicative, as opposed to
additive, way.

The differential form
$$
d\bld{x}(t)=A\bld{x}(t)dt
+\sum_{k=1}^m B_k\bld{x}(t)dw_k(t),\
d\bld{y}(t)=A\bld{y}(t)dt
+\sum_{k=1}^m B_k\bld{y}(t)dw_k(t)
$$
 is a more compact, and less formal, way to write \eqref{CT1}.

The peculiar behavior of white noise in continuous time, often
written informally as $(dw(t))^2=dt$,  makes it necessary
to modify the usual product rule for the derivatives. The result is know as
the {\tt It\^{o} formula}; its one-dimensional version is
 presented below for the convenience
of the reader.

\begin{proposition}
If $a,b,\sigma$, and $\mu$ are globally Lipschits  continuous functions
and $f(0)$, $g(0)$ are non-random,
then

(a)  there are unique continuous random processes $f$ and $g$ such that
\begin{align*}
f(t)=f(0)+\int_0^t a(f(s))ds+\int_0^t \sigma(f(s))dw(s),\\
g(t)=g(0)+\int_0^t b(g(s))ds+\int_0^t \mu(g(s))dw(s);
\end{align*}

(b) the following equality holds:
\begin{equation}
\label{Ito1}
\bE f(t)g(t)= f(0)g(0)
+\int_0^t \bE\big(f(s)b(g(s))+g(s)a(f(s))+\sigma(f(s))\mu(g(s))\big)ds.
\end{equation}
\end{proposition}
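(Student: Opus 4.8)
The plan is to handle parts (a) and (b) separately. Part (a) is precisely the classical existence-and-uniqueness theorem for It\^{o} stochastic differential equations with globally Lipschitz coefficients: since $a,b,\sigma,\mu$ are assumed globally Lipschitz and $f(0),g(0)$ are non-random (hence square-integrable), I would invoke \cite[Theorem 5.2.1]{Oksendal} verbatim. The same framework, together with the linear-growth bound that Lipschitz continuity provides (e.g.\ $|\sigma(x)|\le|\sigma(0)|+L|x|$), also delivers the a priori moment estimate $\sup_{s\le t}\bE|f(s)|^2<\infty$ and its analogue for $g$; I record these here because they are exactly what part (b) will need.

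For part (b), the strategy is to apply the two-dimensional It\^{o} formula to the product function $\phi(x,y)=xy$ evaluated along the pair $(f,g)$. Writing the equations in differential form,
\[
df(s)=a(f(s))\,ds+\sigma(f(s))\,dw(s),\qquad dg(s)=b(g(s))\,ds+\mu(g(s))\,dw(s),
\]
and using $\phi_x=y$, $\phi_y=x$, $\phi_{xx}=\phi_{yy}=0$, $\phi_{xy}=1$, the It\^{o} formula produces the product rule $d(fg)=g\,df+f\,dg+d\langle f,g\rangle$. The crucial structural point is that $f$ and $g$ are driven by the \emph{same} Brownian motion $w$, so the quadratic covariation contributes the single cross term $d\langle f,g\rangle=\sigma(f(s))\mu(g(s))\,ds$ (informally, $(dw)^2=ds$). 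Substituting the drift and diffusion coefficients and collecting terms gives
\[
d\big(f(s)g(s)\big)=\big(g(s)a(f(s))+f(s)b(g(s))+\sigma(f(s))\mu(g(s))\big)\,ds+\big(g(s)\sigma(f(s))+f(s)\mu(g(s))\big)\,dw(s).
\]

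I would then integrate from $0$ to $t$ and take expectations to obtain \eqref{Ito1}, \emph{provided} the expectation of the stochastic integral vanishes. This is the one step that genuinely requires care, and I expect it to be the main obstacle rather than the algebra of the product rule. It holds because the integrand $g\sigma(f)+f\mu(g)$ is square-integrable on $[0,t]$: by the linear growth of $\sigma,\mu$, the quantity $\bE\int_0^t|g(s)\sigma(f(s))+f(s)\mu(g(s))|^2\,ds$ is controlled by $\sup_{s\le t}\big(\bE|f(s)|^2+\bE|g(s)|^2\big)<\infty$, which is where the moment estimates from part (a) enter. Consequently the $dw$-integral is a genuine martingale started at zero (not merely a local martingale), so its expectation is zero, and the remaining Lebesgue integral may be taken outside the expectation by Fubini, yielding the claimed identity.
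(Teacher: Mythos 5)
Your proposal follows essentially the same route as the paper's proof: the It\^{o} product rule with the cross-variation correction $\sigma(f(s))\mu(g(s))\,ds$ (arising because both processes are driven by the same Brownian motion $w$), followed by integration and taking expectations so that the stochastic integrals vanish. The only difference is that you explicitly justify the vanishing expectation through linear growth and second-moment estimates, making the $dw$-integrals genuine martingales --- a detail the paper merely asserts and delegates to \cite[Chapter 4]{Oksendal}.
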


\begin{proof}
In differential form,
$$
d(fg)=fdg+gdf+\sigma\mu\,dt,
$$
where the first two terms on the right come from the usual product rule and the
third term, known as the It\^{o} correction, is a consequence of
$(dw(t))^2=dt$. The expected value of stochastic integrals is zero:
$$
\bE\int_0^t f(s)\mu(g(s))dw(s)=\bE\int_0^t g(s)\sigma(f(s))dw(s)=0,
$$
and then \eqref{Ito1} follows.
 For  more details, see, for example,
\cite[Chapter 4]{Oksendal}.
\end{proof}

\begin{proposition}
\label{prop-CT}
Define $V (t)=\bE \bld{x}(t)\bld{y}^*(t)$,  the covariance matrix
of the random vectors $\bld{x}(t)$ and $\bld{y}(t)$ from \eqref{CT1},
and define $r_x(t)=\bE \bld{x}^*(t)\bld{x}(t)$.  Then
 the vector
$$
\bld{U} (t)=\mathrm{vec}\big(V (t)\big)
$$
satisfies
\begin{equation}
\label{CT3}
{\bld{U}} (t)=e^{t C_{A,B}}\bld{U} (0),
\end{equation}
with the matrix
\begin{equation}
\label{CT4}
C_{A,B}=\overline{A}\otimes I + I\otimes A
+\sum_{k=1}^m \overline{B}_k\otimes B_k,
\end{equation}
and the number $r_x(t)$ satisfies
\begin{equation}
\label{CT6.1}
|\bld{u}|^2e^{\gamma t} \leq r_x(t)\leq |\bld{u}|^2e^{\beta t},
\end{equation}
where $\gamma$ is the smallest eigenvalue and $\beta$ is the largest
eigenvalue of the  Hermitian matrix
\begin{equation}
\label{CT-MM}
M_{A,B}=A+A^*+\sum_{k=1}^m B_k^*B_k.
\end{equation}
\end{proposition}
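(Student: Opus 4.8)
The plan is to follow the discrete-time argument of Proposition~\ref{Prop-dtc} line by line, with the algebraic expansion of the product $\bld{x}(n+1)\bld{y}^*(n+1)$ replaced by the \Ito product rule. First I would apply the complex, multidimensional analogue of \eqref{Ito1} to each entry $x_i(t)\overline{y_j(t)}$ of the matrix $\bld{x}(t)\bld{y}^*(t)$. Because $x_i$ and $\overline{y_j}$ are both driven by the same real Brownian motions $w_k$, the only nonvanishing \Ito corrections are $\sum_{k}(B_k\bld{x})_i\,\overline{(B_k\bld{y})_j}\,dt$, while the drift contributes $(A\bld{x})_i\overline{y_j}+x_i\overline{(A\bld{y})_j}$. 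Taking expectations kills the stochastic-integral parts, and reassembling the entries into matrix form should give the linear matrix differential equation
\[
\frac{d}{dt}V(t)=AV(t)+V(t)A^{*}+\sum_{k=1}^{m}B_kV(t)B_k^{*},\qquad V(0)=\bld{u}\bld{v}^{*}.
\]

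Next I would vectorize this equation using \eqref{KP}. Keeping track of the transposes and conjugates gives $\mathrm{vec}(AV)=(I\otimes A)\bld{U}$, $\mathrm{vec}(VA^{*})=(\overline{A}\otimes I)\bld{U}$, and $\mathrm{vec}(B_kVB_k^{*})=(\overline{B}_k\otimes B_k)\bld{U}$, so the three groups of terms combine to exactly $C_{A,B}\bld{U}(t)$ with $C_{A,B}$ as in \eqref{CT4}. The resulting system $\dot{\bld{U}}=C_{A,B}\bld{U}$ is a constant-coefficient linear ODE, whose solution with initial value $\bld{U}(0)$ is \eqref{CT3}.

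For the bound \eqref{CT6.1} I would apply \Ito to the real scalar $|\bld{x}(t)|^{2}=\bld{x}^{*}(t)\bld{x}(t)$, again summing the entrywise contributions. The drift collects into $\bld{x}^{*}(A+A^{*})\bld{x}$ and the \Ito correction into $\sum_k|B_k\bld{x}|^{2}=\bld{x}^{*}\big(\sum_k B_k^{*}B_k\big)\bld{x}$, so after taking expectations
\[
\frac{d}{dt}r_x(t)=\bE\big(\bld{x}^{*}(t)M_{A,B}\,\bld{x}(t)\big).
\]
Since $M_{A,B}$ is Hermitian, \eqref{bound1} yields $\gamma\,r_x(t)\le \dot r_x(t)\le \beta\,r_x(t)$, and integrating these bracketing linear inequalities (Gronwall, or direct comparison with the solutions of $\dot\varphi=\gamma\varphi$ and $\dot\varphi=\beta\varphi$) gives \eqref{CT6.1}, using $r_x(0)=|\bld{u}|^{2}$ because the initial data are non-random.

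I expect the only real difficulty to be bookkeeping rather than conceptual: one must track the complex conjugations carefully through both the \Ito correction term and the vec identity so that the Kronecker factors emerge precisely as $\overline{A}\otimes I$, $I\otimes A$, and $\overline{B}_k\otimes B_k$. A secondary technical point is justifying that the stochastic integrals have zero expectation and that differentiation under the expectation is legitimate; both follow from the finite second moments of the solutions of \eqref{CT1} guaranteed by the cited existence and uniqueness theory.
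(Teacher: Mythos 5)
Your proposal is correct and follows essentially the same route as the paper: the \Ito product rule applied to $\bld{x}(t)\bld{y}^*(t)$ yields the matrix equation $\dot V = AV + VA^* + \sum_k B_k V B_k^*$, which vectorizes via \eqref{KP} to give \eqref{CT3}, and the bound \eqref{CT6.1} comes from \eqref{bound1} applied to $\bE\,\bld{x}^*M_{A,B}\bld{x}$. The only (immaterial) difference is in the last integration step: you close the differential inequality $\gamma r_x \le \dot r_x \le \beta r_x$ by Gronwall/comparison, whereas the paper writes $r_x(t)=|\bld{u}|^2e^{at}+\int_0^t e^{a(t-s)}f_a(s)\,ds$ and reads off the bounds from the sign of $f_a$ at $a=\gamma$ and $a=\beta$.
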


\begin{proof}
In differential form,
$$
d\bld{x}(t)=A\bld{x}(t)dt
+\sum_{k=1}^m B_k\bld{x}(t)dw_k(t),\
d\bld{y}^*(t)=\bld{y}^*(t)A^*dt
+\sum_{k=1}^m \bld{y}^*(t)B_k^*dw_k(t).
$$
By the It\^{o} formula,
\begin{equation*}
\label{CT2}
V(t)
=V(0)+
\int_0^t \left(AV (s)+V(s)A^*
+\sum_{k=1}^m B_kV(s)B_k^*\right)ds,
\end{equation*}
and \eqref{CT3} follows from \eqref{KP}.

Similarly,
$$
{r}_x(t) = r_x(0)+\int_0^t
\bE \bld{x}^*(s)M_{A,B} \bld{x}(s)ds,
$$
and then, for every real number $a$,
$$
r_x(t)=r_x(0)+\int_0^t ar_x(s)ds + \int_0^tf_a(s)ds,
$$
where
$$
f_a(s)=\int_0^t \bE \Big( \bld{x}^*(s)M_{A,B} \bld{x}(s)
- a\bld{x}^*(s)\bld{x}(s)\Big)ds.
$$
In other words,
$$
r_x(t)=|\bld{u}|^2 e^{at} +\int_0^t e^{a(t-s)} f_a(s)ds.
$$
If  $a=\gamma$ (the smallest eigenvalue of $M_{A,B}$),
then $f_a(s)\geq 0$ and the lower bound in \eqref{CT6.1}
follows; if $a=\beta$ (the largest eigenvalue of $M_{A,B}$),
then $f_a(s)\leq 0$, and the upper bound in
\eqref{CT6.1} follows.

\end{proof}

\noindent Given the origin of equation \eqref{CT3}, the matrix $C_{A,B}$ is
natural to call the {\tt continuous-} {\tt time stochastic Kronecker sum}
of the matrices $A$ and $B_k$.

For a square matrix $A$, denote by $\bld{\alpha}$ the {\tt spectral
abscissa} of $A$:
$$
\bld{\alpha}(A) =\max\{ \Re \lambda(A): \lambda(A) {\rm \ is \ an \ eigenvalue\
of } \ A\}.
$$
It is known \cite[Theorem 15.3]{Embree} that
\begin{equation}
\label{SPA}
\bld{\alpha}(A) = \lim_{t\to + \infty} \frac{1}{t} \ln |e^{tA}|.
\end{equation}

\begin{theorem}
For every matrices
$A,B_1,\ldots, B_m\in \mathbb{C}^{\dd\times \dd}$,
\begin{equation}
\label{CT5}
\bld{\varrho}(M_{A,B})\leq \bld{\alpha}(C_{A,B})\leq
\bld{\alpha}(M_{A,B}).
\end{equation}
\end{theorem}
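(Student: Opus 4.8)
The plan is to follow the discrete-time argument almost verbatim, replacing the powers $\beta^n,\gamma^n$ by the exponentials $e^{\beta t},e^{\gamma t}$ and the spectral-radius formula \eqref{SPR} by the spectral-abscissa formula \eqref{SPA}. Everything hinges on Proposition \ref{prop-CT}, which supplies both the representation $\bld{U}(t)=e^{tC_{A,B}}\bld{U}(0)$ from \eqref{CT3} and the two-sided estimate \eqref{CT6.1} squeezing $r_x(t)$ between $|\bld{u}|^2e^{\gamma t}$ and $|\bld{u}|^2e^{\beta t}$, where $\gamma=\bld{\varrho}(M_{A,B})$ and $\beta=\bld{\alpha}(M_{A,B})$.

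For the upper bound I would start from $|\bld{U}(t)|=|e^{tC_{A,B}}\bld{U}(0)|$ and combine the Cauchy--Schwarz estimate \eqref{cov} with the upper bound of \eqref{CT6.1}, applied to both $\bld{x}$ and $\bld{y}$, to get $|e^{tC_{A,B}}\bld{U}(0)|\le\sqrt{r_x(t)r_y(t)}\le|\bld{u}|\,|\bld{v}|\,e^{\beta t}$. Since $\bld{U}(0)=\mathrm{vec}(\bld{u}\bld{v}^*)=\bld{v}\otimes\bld{u}$ with $\bld{u},\bld{v}$ ranging over all of $\bC^\dd$, the rank-one tensors $\bld{v}\otimes\bld{u}$ span $\bC^{\dd^2}$; expressing an arbitrary vector as a finite combination of such tensors and using the triangle inequality yields an operator-norm bound $|e^{tC_{A,B}}|\le a\,e^{\beta t}$ for some constant $a>0$ depending only on $\dd$. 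Feeding this into \eqref{SPA} and letting $t\to+\infty$ makes $a$ disappear and gives $\bld{\alpha}(C_{A,B})\le\beta=\bld{\alpha}(M_{A,B})$. For the lower bound I would specialize to $\bld{u}=\bld{v}$ with $|\bld{u}|=1$, so that $\bld{x}(t)=\bld{y}(t)$ for all $t$; then the reverse estimate \eqref{cov1} together with the lower bound in \eqref{CT6.1} gives $|\bld{U}(t)|\ge\dd^{-1/2}r_x(t)\ge\dd^{-1/2}e^{\gamma t}$, while $|\bld{U}(t)|=|e^{tC_{A,B}}\bld{U}(0)|\le|e^{tC_{A,B}}|$ because $|\bld{U}(0)|=1$. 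Combining these and applying \eqref{SPA} as $t\to+\infty$ yields $\gamma=\bld{\varrho}(M_{A,B})\le\bld{\alpha}(C_{A,B})$.

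The only step requiring genuine care, rather than a direct transcription of the discrete-time proof, is the passage from the pointwise bound on the rank-one inputs $\bld{v}\otimes\bld{u}$ to the operator-norm bound on $e^{tC_{A,B}}$: the rank-one tensors do not form a linear subspace, so one must invoke their spanning property and absorb the resulting finite sum into the constant $a$. Because \eqref{SPA} only detects the exponential growth rate, this constant is harmless in the limit, which is precisely why bounding the spectral abscissa, rather than a norm at a fixed time, is the natural statement for matrices of the type \eqref{CTM}.
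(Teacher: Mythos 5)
Your proposal is correct and follows essentially the same route as the paper: the representation $\bld{U}(t)=e^{tC_{A,B}}\bld{U}(0)$ from \eqref{CT3}, the Cauchy--Schwarz bound \eqref{cov} with \eqref{CT6.1} for the upper estimate, the choice $\bld{u}=\bld{v}$, $|\bld{u}|=1$ with \eqref{cov1} for the lower estimate, and \eqref{SPA} to extract the spectral abscissa. The only addition is that you spell out the passage from the bound on rank-one inputs $\bld{v}\otimes\bld{u}$ to the operator-norm bound $|e^{tC_{A,B}}|\leq a\,e^{\beta t}$ (via spanning and the triangle inequality), a detail the paper leaves implicit when it asserts \eqref{CT8}; this is a legitimate and welcome clarification, not a different argument.
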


\begin{proof}
As in Proposition \ref{prop-CT},
we write
$
\beta=\bld{\alpha}(M_{A,B}),\ \gamma= \bld{\varrho}(M_{A,B}).
$
It follows from \eqref{CT3} that
\begin{equation}
\label{CT5.1}
|\bld{U} (t)|=|e^{tC_{A,B}}\,\bld{U} (0)|.
\end{equation}
By \eqref{cov},
\begin{equation}
\label{CT6}
|\bld{U} (t)|\leq \sqrt{r_x(t)r_y(t)},
\end{equation}
and then \eqref{CT6.1} implies
\begin{equation}
\label{CT7}
|e^{tC_{A,B}}\,\bld{U} (0)|\leq |\bld{u}|\, |\bld{v}|\,e^{\beta t}.
\end{equation}
Since $\bld{U} (0)=\mathrm{vec}(\bld{u}\bld{v}^*)=\bld{v}\otimes \bld{u}$, and $\bld{u}$
and $\bld{v}$ are arbitrary vectors in $\mathbb{C}^\dd$, it follows from
\eqref{CT7} that
\begin{equation}
\label{CT8}
|e^{tC_{A,B}}|\leq b e^{\beta t}
\end{equation}
for a positive real number $b$. Then the upper bound in \eqref{CT5} follows from
\eqref{CT8} and \eqref{SPA}.

To get the lower bound, take $\bld{u}=\bld{v}$ with $|\bld{u}|=1$,
so that  $\bld{x}(t)=\bld{y}(t)$ for all $t\geq 0$.
Then \eqref{cov1} and \eqref{DT6} imply
 $$
 \dd^{-1/2} e^{\gamma t} \leq |\bld{U} (n)|  \leq |e^{tC_{A,B}}|,
 $$
 and the lower  bound in \eqref{CT5} follows from
 \eqref{SPA}.

\end{proof}

\section{Examples and Further Discussions}
\label{secEx}
Without additional information about the matrices $A$ and $B$,
it is not possible to know how tight the bounds in \eqref{DTSKS}
and \eqref{CTSKS} will be. As an example, consider two real matrices
$$
A=
\left(
\begin{array}{ll}
a& 0\\
0& b
\end{array}
\right),\ \
B=
\left(
\begin{array}{ll}
0& 0\\
\sigma & 0
\end{array}
\right).
$$
The corresponding stochastic systems are
$$
x_1(n+1)=ax_1(n),\ x_2(n+1)=bx_2(n)+\sigma x_1(n)\xi_{n+1}
$$
in discrete time, and
$
dx_1(t)=ax_1(t)dt,\ dx_2(t)=bx_2(t)dt+ \sigma x_1(t)dw(t)
$
in continuous time.
Then
\begin{align*}
D_{A,B}&=A\otimes A + B\otimes B=
\left(
\begin{array}{cccc}
a^2 & 0     & 0     & 0 \\
0     & ab &  0    & 0 \\
0     &  0    & ab & 0 \\
\sigma^2 & 0 & 0 & b^2
\end{array}
\right),\\
N_{A,B}&=A^{\top}A + B^{\top}B=
\left(
\begin{array}{cc}
a^2+\sigma^2 & 0\\
0 & b^2
\end{array}
\right);\\
C_{A,B}&=A\otimes I + I\otimes A + B\otimes B=
\left(
\begin{array}{cccc}
2a & 0     & 0     & 0 \\
0     & a+b &  0    & 0 \\
0     &  0    & a+b & 0 \\
\sigma^2 & 0 & 0 & 2b
\end{array}
\right),\\
M_{A,B}&=A^{\top}+A + B^{\top}B=
\left(
\begin{array}{cc}
2a+\sigma^2 & 0\\
0 & 2b
\end{array}
\right).
\end{align*}
In particular, both $\bld{\rho}(D_{A,B})$ and $\bld{\alpha}(C_{A,B})$
do not depend on $\sigma$:
 $$
 \bld{\rho}(D_{A,B})=\max(a^2,b^2),\
 \bld{\alpha}(C_{A,B}) = \max(2a, 2b),
 $$
 whereas
 $$
 \bld{\alpha}(N_{A,B})=\max(a^2+\sigma^2, b^2)
 \ {\rm and} \
 \bld{\alpha}(M_{A,B}) = \max(2a+\sigma^2, 2b)
 $$
 can be arbitrarily large.

  An important question in the study of
 stochastic systems is whether  the matrices $D_{A,B}$ and
 $C_{A,B}$ are stable, that is,
 $
 \bld{\rho}(D_{A,B})<1$ and
 $ \bld{\alpha}(C_{A,B})<0.
 $
 One consequence of Propositions \ref{Prop-dtc} and \ref{prop-CT} is that
 stability of the stochastic Kronecker sum matrix is equivalent to the
  mean-square asymptotic stability of the
 corresponding stochastic system:
 \begin{align*}
 &\bld{\rho}(D_{A,B})<1\
 \Leftrightarrow \ \lim_{n\to \infty} \bE |\bld{x}(n)|^2=0,\\
 &\bld{\alpha}(C_{A,B})<1 \
 \Leftrightarrow\ \lim_{t\to +\infty} \bE |\bld{x}(t)|^2=0.
 \end{align*}

 The example shows that it is possible to have this stability
 even when the matrices $N_{A,B}$ and $M_{A,B}$
 are not stable: $D_{A,B}$ is stable if (and only if)  $\max(|a|,|b|)<1$,
 and $C_{A,B}$ is stable if (and only if) $\max(a,b)<0$; this is also
 clear by looking directly at the corresponding stochastic system.

  One can always use the
 lower bounds in \eqref{DTSKS} and \eqref{CTSKS} to
 check if the  the matrices $D_{A,B}$ and
 $C_{A,B}$ (and hence the corresponding systems)
 are not stable. In the above example,
 if
 $$
 \bld{\varrho}(N_{A,B})=\min(a^2+\sigma^2,b^2)>1,
 $$
  then $|b|>1$ and  $D_{A,B}$ is certainly not stable;
similarly,
 if
 $$
 \bld{\varrho}(M_{A,B})=\min(2a+\sigma^2,2b)>0,
 $$
 then $b>0$ and $C_{A,B}$ is certainly  not stable.


\def\cprime{$'$}
\providecommand{\bysame}{\leavevmode\hbox to3em{\hrulefill}\thinspace}
\providecommand{\MR}{\relax\ifhmode\unskip\space\fi MR }
\providecommand{\MRhref}[2]{%
  \href{http://www.ams.org/mathscinet-getitem?mr=#1}{#2}
}
\providecommand{\href}[2]{#2}

\vskip 0.2in

\end{document}